\documentclass[a4paper,12pt,oneside]{amsart}

\usepackage{amssymb}  
\usepackage{latexsym} 
\usepackage{comment}
\usepackage{color}
\usepackage{colonequals}

 \usepackage{epsfig}
\usepackage{tikz}

\usepackage{dsfont}

\definecolor{darkgreen}{rgb}{0,0.5,0}
\usepackage[
        colorlinks, citecolor=darkgreen,
        pdfauthor={Roberto Pignatelli},
        pdftitle={Template},
        linktocpage        
]{hyperref}

\usepackage[alphabetic,backrefs,lite]{amsrefs} 

\usepackage[all]{xy} 

\usepackage{enumerate} 


\newcommand{\CC}{\ensuremath{\mathbb{C}}}

\newcommand{\NN}{\ensuremath{\mathbb{N}}}
\newcommand{\PP}{\ensuremath{\mathbb{P}}}

\newcommand{\ZZ}{\ensuremath{\mathbb{Z}}}

\DeclareMathOperator{\Aut}{Aut}

\newcommand\dual{\mathrel{\raise3pt\hbox{$\underline{\mathrm{\thinspace d
\thinspace}}$}}}
\newcommand\qe{\ifhmode\unskip\nobreak\fi\quad $\Box$}       

\def\BOX{\hfill\lower.5\baselineskip\hbox{$\Box$}}

\newtheorem{theorem}{Theorem}[section]
\newtheorem{lemma}[theorem]{Lemma}
\newtheorem{corollary}[theorem]{Corollary}
\newtheorem{proposition}[theorem]{Proposition}

\newtheorem*{theorem*}{Theorem}
\newtheorem*{problem*}{Problem}
\newtheorem*{question*}{Question}

\theoremstyle{remark}
\newtheorem{remark}[theorem]{Remark}

\theoremstyle{definition}
\newtheorem{definition}[theorem]{Definition}

\numberwithin{equation}{section}

\addtolength{\hoffset}{-1cm}
\addtolength{\textwidth}{2cm}

\setlength{\parskip}{0.8ex plus 0.1ex minus 0.1ex}
\setlength{\parindent}{0mm}


\newcounter{nootje}
\setcounter{nootje}{1}
\renewcommand\check[1]
  {\marginpar{\tiny\begin{minipage}{20mm}\begin{flushleft}\thenootje : #1\end{flushleft}\end{minipage}}\addtocounter{nootje}{1}}
\setlength{\marginparsep}{2mm}
\setlength{\marginparwidth}{20mm}


\begin{document}

\title{Some surfaces with canonical map of degree $4$.}

\author{Federico Fallucca}
\address{Dipartimento di Matematica,
	Universit\`a di Trento,
	via Sommarive 14,
	I-38123 Trento, Italy.}
\email{Federico.Fallucca@unitn.it}

\author{Roberto Pignatelli}
\address{Dipartimento di Matematica,
	Universit\`a di Trento,
	via Sommarive 14,
	I-38123 Trento, Italy.}
\email{Roberto.Pignatelli@unitn.it}
\date{\today}
\thanks{
\textit{2010 Mathematics Subject Classification}: 14J29\\
\textit{Keywords}: Product-quotient surfaces, canonical map\\
This paper originated by several enlighting discussions of the second author with  C. Glei\ss ner and C. Rito on the idea of considering product-quotient surfaces $C_1 \times C_2/G$ with a subgroup $H \subset G$ such that the canonical map of  $C_1 \times C_2/H$ factors through  $C_1 \times C_2/G$. He thanks both of them heartily.
We thank J. Stevens, S. Troncoso and the anonymous user ``mathlove'' on Stack Exchange for interesting discussions on the behaviour of the function $\sigma$, and N. Bin for pointing out a useful reference.
The second author is indebted with M. Mendes Lopes and R. Pardini for sharing a preliminary version of their inspiring survey. The second author thanks M. Penegini and F. Polizzi for inviting him to give a talk at the $8^{th}$ ECM in Portorose, where he presented a preliminary version of this result: the beautiful atmosphere and the clever questions posed there helped us in  improving our results. 
}

 \makeatletter
    \def\@pnumwidth{2em}
  \makeatother

\begin{abstract}
	In this note we construct unbounded families of minimal surfaces of general type whose canonical map has a degree of $4$ such that the limits of the slopes $K^2/\chi$ assume countably many different values in the closed interval $\left[6+\frac23 ,8\right]$.
\end{abstract}

\maketitle

\addtocontents{toc}{\protect\setcounter{tocdepth}{1}}


\section*{Introduction}

In this paper a surface is a compact complex manifold of dimension $2$. An {\it unbounded family} of surfaces is a sequence of surfaces $S_n$ with an arbitrarily large Euler characteristic $\chi({\mathcal O}_{S_n})$. More precisely, our unbounded families are sequence of surfaces $S_n$ such that $\lim_{n \to \infty} \chi({\mathcal O}_{S_n})=+\infty$.

It is well known since the pioneering  work of Beauville \cite{Beauville} and a Theorem of Xiao Gang \cite{Xiao Gang} that the degree of the canonical map of a surface $S$, if we assume a large enough Euler characteristic, is bounded from above by $8$. Recall that the degree of the canonical map is a birational invariant, so we can without loss of generality assume that $S$ is minimal. 

We address the reader to the beautiful survey  of M. Mendes Lopes and R. Pardini \cite{survey} on the subject. 
We read from there, among  other things, examples of unbounded sequences of minimal surfaces whose canonical map has a degree of $\delta$ for every $\delta \in \{2,4,6,8\}$. 

Recall that the slope $\mu$ of a minimal surface $S$ is defined as $\mu(S):=\frac{K^2_{S}}{\chi({\mathcal O}_{S})}$.
By the Bogomolov-Miyaoka-Yau inequality  $\mu(S) \leq 9$. By the above mentioned results it easily follows that for any unbounded family $S_n$ of minimal surfaces whose canonical map has a degree of $\delta$, $\liminf \mu(S_n) \geq \delta$.
This raises the question of investigating, for all $\delta$, the set of the accumulation points of the slopes of unbounded families of minimal surfaces whose canonical map has a degree of $\delta$.
Compare \cite{survey}*{Question 5.6}.

We know only three constructions of unbounded families of minimal surfaces whose canonical map has a degree of $4$. 

The first, mentioned in \cite{survey}, is obtained by taking the product of two hyperelliptic curves. All these surfaces have slope $8$.

The second, see \cite{Bin}*{Remark 3},  is a construction as Galois cover of $\PP^1 \times \PP^1$ with Galois group  $(\ZZ/2\ZZ)^3$; they also have $\lim \mu\left( S_n \right)$ equal to $8$.

The last, constructed by F. J. Gallego and G. P. Purnaprajna, give unbounded families with $\lim \mu\left( S_n \right)$ equal either to $8$ or to $4$, see the last column of  \cite{GP1}*{Table at page 5491}.

Inspired by certain constructions of $K3$ surfaces in \cite{AliceMatteo}, we show that $\lim \mu\left( S_n \right)$, when $S_n$ is an unbounded families of minimal surfaces whose canonical map has a degree of $4$, may assume infinitely many different values. More precisely 
\begin{theorem*}
There are countably many unbounded sequences $S_n$ of surfaces of general type that have canonical map of degree $4$
such that
$
\lim_{n\rightarrow \infty} \mu \left( S_n\right)
$
assumes pairwise distinct values in the range $\left[6 + \frac23 ,8\right]$. 
\end{theorem*}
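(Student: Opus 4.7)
The plan is to realise each $S_n$ as the minimal resolution of a product-quotient surface $(C_1\times C_2)/H$, following the strategy sketched in the acknowledgement. Concretely, I would fix a finite group $G$ acting diagonally on a product $C_1\times C_2$ of smooth curves of genus $\geq 2$ and a subgroup $H\subset G$ of index $4$, chosen so that every $H$-invariant section of $K_{C_1\times C_2}$ is automatically $G$-invariant. Under this hypothesis the canonical map of $X_H:=(C_1\times C_2)/H$ factors through the intermediate covering $X_H\to X_G:=(C_1\times C_2)/G$ of degree $[G:H]=4$, so that (provided the canonical map of $X_G$ is birational onto its image) the canonical map of $X_H$ has degree exactly $4$. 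The coincidence of $H$- and $G$-invariants is a representation-theoretic condition on the $G$-module $H^0(K_{C_1})\otimes H^0(K_{C_2})$ which is checkable from the character tables of the two curve actions.

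Next I would compute the invariants of the minimal resolution $\widetilde X_H\to X_H$. The standard formulas for $K^2$ and $\chi(\mathcal O)$ on a product-quotient surface split into a product contribution, proportional to $(g(C_1)-1)(g(C_2)-1)/|H|$, plus corrective terms coming from the cyclic quotient singularities of $X_H$; the latter are governed by the function $\sigma$ referred to in the acknowledgements. Thus, as the genera of $C_1$ and $C_2$ grow with the signatures of the actions held fixed, the slope $\mu(S_n)=K^2/\chi(\mathcal O)$ converges to an explicit limit that depends only on the signature data and on the combinatorial contribution of $\sigma$ at each admissible cyclic singularity.

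To obtain countably many distinct limit slopes I would exhibit an explicit sequence of such constructions, indexed by an integer $m$ (tuning, for instance, the order of one of the generators in a chosen branch-point signature), such that Riemann existence provides curves realising the data for arbitrarily large $m$ and for arbitrarily large genus in each family. A direct computation then expresses the limit slope of the $m$-th family as a rational function of $m$, which I would verify is strictly monotone, tends to $8$, and always lies in $[6+\frac23,8]$. The main obstacle, and presumably the technical heart of the paper, is precisely this last bookkeeping: although the existence of the curves and the identification $\deg\Phi_K=4$ reduce to mechanical checks in each case, showing that the resulting countable collection of rational slopes is \emph{pairwise distinct} and confined to the closed interval $[6+\frac23,8]$ seems to require a careful monotonicity and convexity analysis of $\sigma$ over the family of cyclic singularities that can appear, which is exactly the kind of delicate combinatorial study alluded to in the acknowledgements.
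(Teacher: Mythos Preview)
Your overall architecture is the one the paper uses: the surfaces are product-quotients $(C_1\times C_2)/H$ with $H$ cyclic of order $n$ acting diagonally on a product of hyperelliptic curves, and the canonical map factors through the further quotient by the Klein four-group $K$ generated by the two hyperelliptic involutions. So the skeleton is right, but two of your key expectations about where the work lies are off.

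First, the quotient $X_G=X/K$ is dominated by $\PP^1\times\PP^1$ and is therefore \emph{rational}; it has no holomorphic $2$-forms, so the sentence ``provided the canonical map of $X_G$ is birational onto its image'' is vacuous and cannot be the mechanism. The paper instead writes down an explicit monomial basis of $H^0(S,K_S)$, picks four monomials mapping onto a smooth quadric $Q\subset\PP^3$, and then uses one further monomial together with the residual cyclic symmetry $(\rho_1,1)$ to separate the $n$ points of $X/K$ lying over a general point of $Q$. This hands-on argument is what pins the degree down to exactly $4$; the representation-theoretic coincidence of invariants only gives the factorisation, not the birationality of the residual map.

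Second, and more importantly, you have located the ``technical heart'' in the wrong place. The paper does \emph{not} carry out any monotonicity or convexity analysis of $\sigma$. It simply takes shift $k=1$, for which the continued fraction of $n/1$ is the single term $n$ and hence $\sigma(1/n)=1$ identically. With $\sigma$ frozen, the limit slope of the family of type $(n,2a,2b)$ as $n\to\infty$ is just $8-\tfrac{32}{d_1d_2}=8\bigl(1-\tfrac{1}{ab}\bigr)$. The countably many distinct limits come from varying the \emph{degrees} $d_1=2a$, $d_2=2b$ (equivalently the composite integer $m=ab\ge 6$), not from varying the singularity type; the lower endpoint $6+\tfrac23$ is simply the value at $m=6$. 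The delicate behaviour of $\sigma$ is mentioned in the paper only as a possible avenue for further results, not as an ingredient of the proof.
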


  All these surfaces are product-quotient surfaces. The product-quotient surfaces have been introduced by the second author, I. Bauer, F. Catanese and F. Grunewald in \cite{4names}
 (compare also \cite{fibred}, \cite{Fano}, \cite{orbifold}). 
 Their canonical map was studied, in the special case of the surfaces isogenous to a product, in \cite{canonicalisogenous}. 
To our knowledge they were used first for constructing surfaces with canonical map of high degree in \cite{GPR}. 

\subsection*{Notation}

For each real number $z$, let $\lceil z \rceil$ be the smallest integer greater or equal than $z$.

For each pair of integers $z,n \in {\mathbb N}$ we denote by $[z]_n$ the unique integer, $0\leq [z]_n \leq n-1$, such that $z-[z]_n$ is divisible by $n$.

We say that a point of a complex analytic variety  is a {\it singular point of type $\frac{p}{q}$}, with $p\in {\ZZ}\setminus \{0\}$, $q \in \NN \setminus \{0\}$, $\gcd(p,q)=1$,   if one of its  neighbourhoods is analytically isomorphic to the quotient of a neighbourhood of the origin of $\CC^2$ by the cyclic group generated by the automorphism $(x,y) \mapsto (e^{\frac{2\pi i}{q}}x,e^{p\frac{2\pi i}{q}}y)$. These singularities are most commonly denoted in the literature as {\it cyclic quotient singularities of type $\frac1{q}(1,r)$}, where $r$ is the remainder of the division of $p$ by $q$.

We say that a variety has {\it basket} of singularities $a_1 \frac{p_1}{q_1} + a_2 \frac{p_2}{q_2}+\cdots + a_r \frac{p_r}{q_r} $ if its singular locus is finite and can be partitioned in $r$ subsets $S_1,\ldots,S_r$ of respective cardinality $a_1,\ldots,a_r$ such that each point in $S_j$ is a singularity of type $\frac{p_j}{q_j}$.


\section{Generalized Wiman Curves}

By a  classical result of Harvey and Wiman (\cites{harvey, wiman}) an automorphism of a curve of genus $g$ at least $2$ has order at most $4g+2$. Moreover, there is exactly one curve of genus $g$ with an automorphism of order $4g+2$ for each integer $g \geq 2$, usually referred in literature as the {\it Wiman curve of genus $g$}. 

\begin{definition}[\bf Generalized Wiman curves]\label{def_WimanCurves}

Consider two positive integers $n,d \geq 1$.

A generalized Wiman curve of type $n,d$ is a curve in the weighted projective space $ \PP\left( 1,1,\left\lceil \frac{nd}2 \right\rceil \right)$ defined by an equation of the form 
\[
y^2=x_0^{[nd]_2} f(x_0^n,x_1^n) 
\] 
where $f$ is a homogenous polynomial  of degree $d$ in the two variables $x_0,x_1$  without multiple roots such that neither $x_0$ nor $x_1$ divide $f$.

\end{definition}

\begin{remark} The assumptions on the polynomial $f$ ensure that any generalized Wiman curve is smooth.
\end{remark}

By adjunction a generalized Wiman curve $C$ of type $n,d$ has genus $g=\left\lceil \frac{nd}2 \right\rceil-1$. In fact a basis of $H^0(C,K_C)$ is given by the monomials
\begin{equation}\label{monomials}
x_0^{\left\lceil \frac{nd}2 \right\rceil-2}, x_0^{\left\lceil \frac{nd}2 \right\rceil-3}x_1, \ldots , x_0x_1^{\left\lceil \frac{nd}2 \right\rceil-3}, x_1^{\left\lceil \frac{nd}2 \right\rceil-2} 
\end{equation}

A generalized Wiman curve of type $n,d$ has  the following two natural commuting automorphisms 
\begin{align*}
\iota \colon& (x_0,x_1,y) \mapsto (x_0,x_1,-y)&
\rho \colon& (x_0,x_1,y) \mapsto (x_0,e^\frac{2 \pi i}{n}x_1,y)&
\end{align*}
of respective order $2$ and $n$. This shows
\begin{enumerate}
\item all generalized Wiman curves are hyperelliptic, $\iota$ being their hyperelliptic involution;
\item a generalized Wiman curve of type $2g+1,1$ is the Wiman curve of genus $g$.
\end{enumerate} 

Since $\iota$ is the hyperelliptic involution, $\iota$ acts on $H^0(C,K_C)$ as the multiplication by $-1$.
The points fixed by $\iota$ are the $2g+2$ points of the divisor $y=0$. 

\begin{definition}
We will say that $\rho$ is the {\it rotation} of $C$.
\end{definition}

We conclude this section by studying the action of the rotation.

\begin{proposition}\label{prop_rhoaction}
The action of $\rho$ on the locus $x_0x_1 \neq 0$  has all orbits of order $n$.

The divisor $x_1=0$ is given by two points, both fixed by $\rho$.

If both $n$ and $d$ are odd, then the divisor $x_0=0$ is given by one single point, fixed by $\rho$. Else the divisor $x_0=0$ is given by two distinct points, fixed by $\rho$ if $d$ is even and exchanged by $\rho$  if $d$ is odd.

The monomials in \eqref{monomials} are eigenvalues for the induced action of $\rho$ on $H^0(C,K_C)$.
More precisely $\rho$ acts on them as
\begin{equation}\label{eq_shift}
x_0^{\left\lceil \frac{nd}2 \right\rceil-2-a}x_1^a \mapsto  e^{(a +1) \frac{2\pi i}n} x_0^{\left\lceil \frac{nd}2 \right\rceil-2-a}x_1^a 
\end{equation}
\end{proposition}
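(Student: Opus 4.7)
The plan is to work directly in weighted projective coordinates, tracking the $\CC^*$-rescaling $(x_0,x_1,y)\sim(\lambda x_0,\lambda x_1,\lambda^{w} y)$ where $w:=\lceil nd/2\rceil$.

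For the orbit structure on $x_0x_1\neq 0$, the first step is to show that the stabilizer of any $P=[x_0:x_1:y]$ in $\langle \rho\rangle$ is trivial: $\rho^k(P)=P$ requires some $\lambda\in\CC^*$ with $\lambda x_0=x_0$, $\lambda x_1=e^{2\pi i k/n} x_1$, and $\lambda^{w}y=y$; from $x_0\neq 0$ we infer $\lambda=1$, and then $x_1\neq 0$ forces $n\mid k$, so the orbit of $P$ has size $n$.

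Next, from the hypothesis $x_0, x_1\nmid f$ one gets $c:=f(1,0)\neq 0$ and $c':=f(0,1)\neq 0$. Substituting $x_1=0$ in the equation defining $C$ yields $y^{2}=c\, x_0^{nd+[nd]_2}=c\, x_0^{2w}$, which cuts out the two points $[1:0:\pm\sqrt c]$; both are $\rho$-invariant because $\rho$ leaves the first and third coordinates untouched. Substituting $x_0=0$: if $nd$ is odd then $[nd]_2=1$ and the equation degenerates to $y^{2}=0$, giving only the (trivially fixed) single point $[0:1:0]$; if $nd$ is even one gets $y^{2}=c'\,x_1^{nd}$ and therefore the two points $[0:1:\pm\sqrt{c'}]$. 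To determine the action on them, rescale $\rho([0:1:y_0])=[0:e^{2\pi i/n}:y_0]$ by $\lambda=e^{-2\pi i/n}$ to obtain $[0:1:e^{-2\pi i w/n}y_0]$; since $w/n=d/2$ the scalar is $(-1)^d$, so the pair is fixed pointwise when $d$ is even and swapped when $d$ is odd.

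For the canonical action, I would pass to the affine chart $x_0=1$, setting $t:=x_1$, where $C$ becomes the affine hyperelliptic curve $y^{2}=f(1,t^n)$ of degree $nd$ in $t$; by standard hyperelliptic theory the differentials $t^a\,dt/y$ for $a=0,\dots,w-2$ form a basis of $H^0(C,K_C)$. The Poincaré residue coming from the adjunction isomorphism $K_C\cong\sO_C(w-2)$ matches, up to a common nonzero scalar, the section $x_0^{w-2-a}x_1^a$ with the differential $t^a\,dt/y$. Since $\rho$ sends $t\mapsto e^{2\pi i/n}t$ and fixes $y$, we have $dt\mapsto e^{2\pi i/n}dt$, whence $t^a\,dt/y\mapsto e^{2\pi i(a+1)/n}t^a\,dt/y$, which is exactly \eqref{eq_shift}. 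The only genuinely subtle point is this extra $e^{2\pi i/n}$ in the eigenvalue: it does not come from the monomial itself (which contributes only $e^{2\pi i a/n}$) but from the $dt$ factor introduced by the Poincaré residue when identifying a weighted-homogeneous polynomial on $\PP(1,1,w)$ with a holomorphic differential on $C$.
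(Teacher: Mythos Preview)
Your proof is correct and follows essentially the same route as the paper's. Both arguments work directly in the weighted projective coordinates: the paper handles the orbit structure on $x_0x_1\neq 0$ by noting that $\rho$ lifts a rotation of $\PP^1=C/\iota$ with no fixed points there, whereas you compute the stabilizer directly via the rescaling relation; and for the eigenvalue formula the paper uses the local coordinate $z=x_1/x_0$ near a $\rho$-fixed point (so the monomial restricts to $z^a\,dz$), while you work in the affine chart $x_0=1$ with the hyperelliptic basis $t^a\,dt/y$ --- these are the same adjunction/Poincar\'e residue identification expressed in slightly different language.
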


\begin{proof}
The rotation lifts the automorphism of $\PP^1=C/\iota$ acting as $(x_0,x_1)\mapsto (x_0,e^\frac{2 \pi i}{n} x_1)$, which fixes only the two points $x_0x_1=0$, so the analogous statement holds for $\rho$.

By the definition of $f$ the point $(x_0,x_1)=(0,1)$ is a branching point of the hyperelliptic $2:1$ map $C \rightarrow \PP^1$ if and only if both $n$ and $d$ are odd, in which case the divisor $x_0=0$ in $C$ is a single (double) point, that is therefore fixed by $\rho$.
Else, if $nd$ is even, $x_0=0$ is formed by two distinct points with homogeneous coordinates $(x_0,x_1,y)=(0,1,\pm \bar{u}_0)$ for some $\bar{u}_0 \neq 0$. 
These two points are either fixed or exchanged by $\rho$.
By the properties of the weighted projective space they are fixed by $\rho$ if and only if $\left(e^{-\frac{2\pi i}{n}}\right)^{ \frac{nd}2 }=1$. We conclude the analysis of the divisor $x_0=0$ by observing that the last equation is verified if and only if $d$ is even.

 Since the point $(x_0,x_1)=(1,0)$ is not a branching point of the hyperelliptic map, the divisor $x_1=0$ is made by two distinct points with coordinates
 $(x_0,x_1,y)=(1,0,\pm \bar{u}_1)$ for some $\bar{u}_1 \neq 0$, both obviously fixed by $\rho$. 
 
 The function $z:=x_1/x_0$ is a local coordinate in both of then, on which $\rho$ acts as $z \mapsto e^\frac{2\pi i}n z$. The adjunction formula maps a monomial $x_0^{\left\lceil \frac{nd}2 \right\rceil-2-a}x_1^a$ to the form that locally restricts to $z^a dz$ and therefore $\rho$ acts on it as the multiplication by $e^{(a +1)\frac{2\pi i}n}$.
 \end{proof}

\section{Wiman product-quotient surfaces}

\begin{definition}\label{def_WimanSurfaces}
For all integers $n,d_1,d_2$ and for all $1\leq k \leq n-1$ with $\gcd(k,n)=1$ we define a {\it Wiman product-quotient surface of type $n,d_1,d_2$} with {\it shift} $k$ to be the minimal resolution $S$  of the singularities of its {\it quotient model} $X:=\left( C_1 \times C_2)\right /H$ where 
\begin{itemize}
\item $C_j$, $j=1,2$  is a generalized Wiman curve of type $n,d_j$;
\item $H \subset \Aut (C_1 \times C_2)$ is the cyclic subgroup of order $n$ generated by the automorphism 
\[
(x,y) \mapsto \left( \rho_1 x , \rho_2^k y \right).
\]
\end{itemize}
where $\rho_j$ is the rotation of $C_j$.
\end{definition}

Denote the hyperelliptic involution of $C_j$ by $\iota_j$. Then $\Aut(C_1 \times C_2)$ contains a subgroup of order $4$ generated by $(\iota_1,1)$ and 
$(1, \iota_2)$. The corresponding quotient of $C_1 \times C_2$ is isomorphic to $\PP^1 \times \PP^1$. 
Since this group commutes with $H$ and it intersects  $H$ trivially, it defines a subgroup $K\cong\left( {\mathbb Z}/2{\mathbb Z}\right)^2$ of $\Aut(X)$. Note that $X/K$ is dominated by $\PP^1 \times \PP^1$ and therefore it is rational.

\begin{lemma}\label{lem_factorsK}
The canonical map of $S$ factors through the rational surface $X/K$.
\end{lemma}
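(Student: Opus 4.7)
The strategy is to show that every element of $H^0(S,K_S)$ is a simultaneous $(-1)$-eigenvector for both generators $(\iota_1,1)$ and $(1,\iota_2)$ of $K$, acting via pullback. Since all canonical sections scale by the same constant under each element of $K$, the induced action on the projectivization $\PP(H^0(S,K_S)^\vee)$ is trivial, which forces the canonical map to be constant on $K$-orbits and hence to factor through $X/K$.

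First I would set up the relevant maps. Let $q\colon C_1\times C_2\to X$ be the quotient by $H$ and let $\pi\colon S\to X$ be the minimal resolution. Since the cyclic quotient singularities of $X$ are log terminal and in particular rational, the pullback along the composition $q$ and the birational inverse of $\pi$ identifies $H^0(S,K_S)$ with a subspace $V$ of $H^0(C_1\times C_2,K_{C_1\times C_2})^H$; this identification is equivariant with respect to any automorphism of $X$ that commutes with $H$ (the minimal resolution is canonical, so $K$ lifts to $S$). Via the K\"unneth decomposition we have
\[
H^0(C_1\times C_2,K_{C_1\times C_2})\;=\;H^0(C_1,K_{C_1})\otimes H^0(C_2,K_{C_2}),
\]
so $V$ sits inside this tensor product.

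Next I would invoke the fact, already recorded just after Definition~\ref{def_WimanCurves}, that the hyperelliptic involution $\iota_j$ acts on $H^0(C_j,K_{C_j})$ as multiplication by $-1$. Therefore $(\iota_1,1)^*=(-1)\otimes \mathrm{id}=-\mathrm{id}$ and $(1,\iota_2)^*=\mathrm{id}\otimes(-1)=-\mathrm{id}$ on the whole K\"unneth tensor product, and a fortiori on $V$. In particular every section of $K_S$ is an eigenvector for both generators of $K$ with eigenvalue $-1$, so the $K$-action on $\PP(V^\vee)$ is trivial.

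Finally I would conclude as follows: writing the canonical map of $S$ as the composition
\[
C_1\times C_2\dashrightarrow S\xrightarrow{\varphi_{|K_S|}}\PP(V^\vee),
\]
the above shows that this rational map on $C_1\times C_2$ is invariant under the subgroup generated by $H$, $(\iota_1,1)$ and $(1,\iota_2)$ inside $\Aut(C_1\times C_2)$. Since $H$ commutes with $K$ and intersects it trivially, this subgroup is $H\times K$, and its quotient is precisely $X/K$. Hence the canonical map factors through $X/K$.

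The only genuinely delicate point is step~1, namely identifying $H^0(S,K_S)$ with $H$-invariant holomorphic $2$-forms on $C_1\times C_2$ and checking that the lifted $K$-action on $S$ is compatible with the linear $K$-action on the tensor product; the eigenvalue computation itself is immediate from $\iota_j^*=-1$.
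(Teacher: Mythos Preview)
Your argument is correct and follows essentially the same route as the paper's proof: both use K\"unneth and the fact that $\iota_j$ acts as $-1$ on $H^0(C_j,K_{C_j})$ to conclude that $K$ acts by scalars on $H^0(S,K_S)$, hence trivially on its projectivization. The only difference is that where you justify the identification of $H^0(S,K_S)$ with (a subspace of) $H^0(C_1\times C_2,K_{C_1\times C_2})^H$ via rationality of the cyclic quotient singularities, the paper simply invokes Freitag's theorem \cite{freitag}*{Satz 1} to get the isomorphism onto the full invariant subspace; for the lemma either version suffices.
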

\begin{proof}
By the Kuenneth formula 
\[
H^0(C_1 \times C_2, K_{C_1 \times C_2}) \cong H^0(C_1 , K_{C_1})  \otimes H^0(C_2, K_{C_2})
\]
and then both involutions $(\iota_1,1)$ and $(1, \iota_2)$ act on $H^0(C_1 \times C_2, K_{C_1 \times C_2})$  as the multiplication by $-1$.
Since by Freitag Theorem \cite{freitag}*{Satz 1} the pull-back map sends $H^0(S,K_S)=H^0(X,K_X)$ isomorphically onto the invariant subspace $H^0(C_1 \times C_2, K_{C_1 \times C_2})^H$,  it follows that all elements of $K$ act on $H^0(S,K_S)=H^0(X,K_X)$ as a multiple of the identity.

This implies that $H^0(S,K_S)$ cannot separate two points in the same orbit by the action of $K$.
\end{proof}

In the ``degenerate'' case $n=1$, $S=X$ is the product of the two hyperelliptic curves $C_1$ and $C_2$. Assuming 
 $d_1,d_2 \geq 5$ (to have genera at least $2$) we find an unbounded family of surfaces with canonical map of degree $4$ as those mentioned in 
  \cite{survey}.
  
 The degree of the canonical map remains in fact $4$ also for bigger $n$. 
\begin{theorem}\label{thm_deg4}
Let $S$ be a Wiman product-quotient surface of type $n,d_1,d_2$ and assume $n\geq 2$.

\begin{enumerate}
\item If $d_1,d_2 \geq 3$, then $K_S$ is nef.
\item If $d_1 \geq 4$, $d_2 \geq 5$ then the canonical map of $S$ has degree $4$.
\end{enumerate}
\end{theorem}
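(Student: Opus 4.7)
The plan is to treat the two parts in sequence, both exploiting the tower $S \xrightarrow{\sigma} X \xrightarrow{q} X/K$ and the eigenvalue data from Proposition \ref{prop_rhoaction}.

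For part (1), Proposition \ref{prop_rhoaction} shows that each non-trivial element of $H$ has only isolated fixed points on $C_1 \times C_2$, so the quotient map $\pi\colon C_1\times C_2 \to X$ is \'etale in codimension one, $\pi^* K_X = K_{C_1\times C_2}$, and $X$ has only isolated cyclic quotient singularities. Since $d_1,d_2\geq 3$ and $n\geq 2$ give $g_j\geq 2$, the divisor $K_X$ is an ample $\mathbb{Q}$-Cartier divisor. Writing $K_S = \sigma^* K_X + \sum e_i E_i$ with $E_i$ the exceptional components of the Hirzebruch--Jung resolution, each $E_i$ is a smooth rational $(-b_i)$-curve with $b_i\geq 2$, giving $K_S\cdot E_i = b_i-2\geq 0$ by adjunction. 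For a non-exceptional irreducible curve $C$ one expands $K_S\cdot C = K_X\cdot \sigma_*C + \sum e_i (E_i\cdot C)$; the first summand is strictly positive by ampleness, and the second can be negative only if $C$ meets a singular point whose local model is non-canonical. The main obstacle in part (1) is bounding this negative contribution: I would do this by a case analysis of the singular points of $X$, which are governed by the stabilizers of the fixed points of the powers of $(\rho_1,\rho_2^k)$, or more economically by invoking the nefness results for product-quotient surfaces with $g_j\geq 2$ (see \cites{4names,fibred}).

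For part (2), Lemma \ref{lem_factorsK} gives
\[
S \xrightarrow{\sigma} X \xrightarrow{q} X/K \xrightarrow{\phi} \mathbb{P}^{p_g-1},
\]
with $\sigma$ birational and $q$ of degree $4$, so the degree of the canonical map is $4\deg\phi$ and the statement reduces to $\deg\phi = 1$. Setting $u = x_1/x_0$ and $v = y_1/y_0$, the function field of $X/K$ is the subfield of $\mathbb{C}(u,v)$ generated by the monomials $u^\alpha v^\beta$ with $(\alpha,\beta)$ in the index-$n$ sublattice
\[
L = \bigl\{ (\alpha,\beta) \in \mathbb{Z}^2 : \alpha + k\beta \equiv 0 \pmod n \bigr\}.
\]
By Proposition \ref{prop_rhoaction} combined with K\"unneth and Freitag's theorem (as used in Lemma \ref{lem_factorsK}), a basis of $H^0(S,K_S)$ is given by the $H$-invariant monomials indexed by pairs $(a_1,a_2)$ with $0\leq a_j\leq g_j-1$ satisfying the congruence $(a_1+1)+k(a_2+1)\equiv 0\pmod n$, and ratios of two such monomials are precisely the monomials $u^{a_1-a_1'}v^{a_2-a_2'}$, whose exponent vectors lie automatically in $L$.

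The remaining task is to exhibit pairs of invariant tuples whose difference vectors span $L$ as a $\mathbb{Z}$-module. Anchor the construction at $(a_1',a_2') = (n-1-k,0)$, valid since $d_1\geq 4$ forces $g_1\geq 2n-1\geq n$. Shifting $a_1$ by $n$ stays within range because $g_1-1\geq 2n-1-k$, producing the ratio $u^n$; simultaneously shifting $a_1$ by $n-k$ and $a_2$ by $1$ stays within range because $g_2\geq 2$, guaranteed by $d_2\geq 5$ (which in fact gives $g_2\geq 2n$), producing the ratio $u^{n-k}v$. Since $(n,0)$ and $(n-k,1)$ form a $\mathbb{Z}$-basis of $L$, these two ratios already generate the invariant function field, so $\phi$ is birational and the canonical map of $S$ has degree exactly $4$. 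Thus once the algebraic setup of Proposition \ref{prop_rhoaction} and Lemma \ref{lem_factorsK} is in place, part (2) is essentially a lattice computation; the genuine difficulty lies in part (1).
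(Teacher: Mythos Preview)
Your argument for part~(2) is correct and takes a genuinely different route from the paper. Both begin with the factorisation through $X/K$ and the eigenbasis $\{m_{a,b}\}$. The paper selects four invariant monomials $m_{0,b},m_{0,b+n},m_{n,b},m_{n,b+n}$ mapping onto a smooth quadric and then separates the $n$ points of $X/K$ above a general point of the quadric using a fifth monomial $m_{1,c}$ together with the residual automorphism induced by $(\rho_1,1)$. You instead identify $\CC(X/K)$ with the monomial field attached to the sublattice $L\subset\ZZ^2$ and exhibit two ratios of canonical sections with exponent vectors $(n,0)$ and $(n-k,1)$ spanning $L$; this is cleaner, needs no auxiliary automorphism, and in fact only uses $g_2\geq 2$, so it already goes through for $d_2\geq 3$.

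Part~(1), however, has a real gap. The ``nefness results for product-quotient surfaces with $g_j\geq 2$'' you invoke from \cites{4names,fibred} do not exist: \cite{fibred} treats the free case, and in the non-free case the minimal resolution of $(C_1\times C_2)/G$ can fail to be minimal even when both factors have genus $\geq 2$ --- this is exactly why minimality must be established separately in the product-quotient literature (cf.\ the title of \cite{orbifold}). Your fallback ``case analysis'' is not carried out and would have to bound $\sum e_i(E_i\cdot C)$ against $K_X\cdot\sigma_*C$ for \emph{every} non-exceptional curve $C$, with singularity types that vary with $n$ and $k$; there is no evident uniform estimate. The paper bypasses discrepancies entirely: using the same monomials $m_{a,b}$ it shows, for $d_1,d_2\geq 3$, that the invariant linear system on $C_1\times C_2$ has finite base locus, so any curve in the fixed part of $|K_S|$ is $\sigma$-exceptional and hence a $(-b)$-curve with $b\geq 2$; since a $(-1)$-curve always lies in the fixed part of $|K_S|$, there are none, $S$ is minimal with $p_g>0$, and $K_S$ is nef. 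So contrary to your closing remark, part~(1) is the place where a short trick (base loci rather than intersection numbers) replaces the hard computation.
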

\begin{proof}
We denote by $x_0,x_1,y$ the coordinates of the weighted projective space containing $C_1$ as in 
Definition \ref{def_WimanCurves}, and by $\bar{x}_0,\bar{x}_1,\bar{y}$ the analogous coordinates for $C_2$. By the Kuenneth formula the monomials
\[
m_{a,b}:=x_0^{\left\lceil \frac{nd_1}2 \right\rceil-2-a} 
\bar{x}_0^{\left\lceil \frac{nd_2}2 \right\rceil-2-b} x_1^a \bar{x}_1^b 
\]
form a basis  of eigenvectors for the action of the $ \left( \rho_1 , \rho_2^k \right)$ of $H$ on $H^0(C_1 \times C_2, K_{C_1 \times C_2})$ with respective eigenvalues
$
 e^{ (a+1+k(b+1)) \frac{2\pi i}n}
$.
So a basis of $H^0(S,K_S)$ is given by the monomials  
\begin{equation}\label{eq_BasisOfKS}
\left\{ m_{a,b} | n \text{ divides } a+1+k(b+1) 
\right\}
\end{equation}

\begin{enumerate}
\item Pulling back $H^0(S,K_S)$ to $C_1 \times C_2$ we obtain a linear system $\Gamma$ defined by the vector subspace $V \subset H^0(C_1 \times C_2,K_{C_1 \times C_2})$ generated by the monomials $m_{a,b}$ in \eqref{eq_BasisOfKS}.

We claim that if both $d_j$ are at least $3$, then the base locus of $\Gamma$ is finite. 

We first note that the divisor defined by each $m_{a,b}$ on $C_1 \times C_2$ is a linear combination of the $4$ divisors $x_0=0$, $\bar{x}_0=0$, $x_1=0$, $\bar{x}_1=0$. Then the base locus of $\Gamma$ is contained in the union of these $4$ divisors. 

We show that the intersection of the base locus of $\Gamma$ with $x_1=0$ is finite. It suffices to prove that there is a monomial in $V$ of the form $m_{0,b}$. In other words, that there is an integer $0\leq b \leq \left\lceil \frac{nd_2}2 \right\rceil-2$ so that $n$ divides $1+k(b+1)$, which is equivalent to ask that the remainder class of $b$ module $n$ is the unique class solving the corresponding congruence. 
Since $d_2\geq 3$, $\left\lceil \frac{nd_2}2 \right\rceil-2 \geq n-1$ and therefore we can find a $b$ in our range for any such a class, giving a monomial $m_{0,b}$ in $V$.

A similar argument show that the intersection of the base locus of $\Gamma$ with each of the other three divisors  $x_0=0$, $\bar{x}_0=0$, $\bar{x}_1=0$ is finite, by showing the existence of a monomial in $V$ of respective type $m_{\left\lceil \frac{nd_1}2 \right\rceil-2,b}$, $m_{a,\left\lceil \frac{nd_2}2 \right\rceil-2}$ and $m_{a,0}$. This concludes the proof of the claim.

Since the base locus of $\Gamma$ is finite, the base locus of $|K_X|$ is finite too whereas the base locus of $|K_S|$ may contain some irreducible curves, all exceptional for the map $S\rightarrow X$, the minimal resolution of the singularities of $X$. In particular there is no $(-1)$-curve in the base locus of $|K_S|$. But a $(-1)$-curve on a surface $S$ is always in the base locus of $|K_S|$! So $S$ is a minimal surface, in the sense that it does not contain $(-1)$-curve.
Since the canonical system is not empty, then $S$ minimal implies that $K_S$ is nef.

\item If $d_1 \geq 4$, $d_2 \geq 5$, arguing as above, we can find a monomial  of the form $m_{0,b}$ in $V$ such that also $m_{0,b+n}$, $m_{n,b}$, $m_{n,b+n}$ belong to $V$. These $4$ monomials  map $C_1\times C_2$  as 
$x_0^n\bar{x}_0^n,x_0^n\bar{x}_1^n, x_1^n\bar{x}_0^n,x_1^n\bar{x}_1^n$
onto a smooth quadric $Q \subset \PP^3$. Then the canonical image of $S$, dominating $Q$, is a surface as well.

Choose a general point $q \in Q$. Its preimage in $C_1 \times C_2$ has cardinality $(2n)^2$, giving $4n$ points of $S$. The group $K$ acts freely on them, giving $n$ smooth points $q_1,\ldots,q_n$ of $X/K$. We know by Lemma \ref{lem_factorsK} that the canonical map of $X$ factors through $X/K$; we finish the proof by showing that it separates the $q_j$.

The automorphism $(\rho_1,1)$ of $C_1 \times C_2$ commutes with $H$, so it defines an automorphism $\rho_X$ of $X$. This automorphism commutes with $K$, so inducing a further automorphism $\rho_K$ of order $n$ of $X/K$. A straightforward direct computation shows that $\rho_K$ permutes the $q_j$ cyclically.  

Now choose a monomial in $V$ of the form  $m_{1,c}$. Then the action of $(\rho_1,1)$ on the vector subspace of $V$ generated by $m_{1,c}, m_{0,b},m_{0,b+n}, m_{n,b}, m_{n,b+n} $ has exactly two distinct eigenvalues, which differ by a primitive $n-$th root of the unity. This implies that  the canonical map of $X$ separates the $q_j$.
\end{enumerate}
\end{proof}

\begin{remark}
The statement of Theorem \ref{thm_deg4} is not meant to be sharp. For example, essentially the same proof shows that part (2) extends to the case $d_1=3$ with the possible exception $n=2$.
\end{remark}
\begin{remark} 
The proof of Theorem \ref{thm_deg4}, part (1) shows that the canonical system of these surfaces has no fixed components.

In fact, it contains all the elements necessary to explicitly compute the base locus of the canonical system, by describing its pull-back on $C_1 \times C_2$, the base locus of the linear system $\Gamma$.

Consider for example the first case $n=2$, $d_1=d_2=3$. In this case $k=1$. Then the given basis of $H^0(S,K_S)$ is $\left\{ x_0\overline{x}_0, x_1\overline{x}_1 \right\}$. This implies that the base locus of $\Gamma$ is formed by $8$ simple points, four defined by $x_0=\overline{x}_1=0$ and four defined by $\overline{x}_0=x_1=0$.
The involution defining $S$ as quotient of $C_1 \times C_2$ acts on these eight points freely, so  $H^0(S,K_S)$ has exactly four simple base points, their images.

By Proposition \ref{prop_rhoaction} this involution fixes exactly $4$ points, those at $x_1=\overline{x}_1=0$, inducing $4$ singular points of type $A_1$ on $S$. The standard formulas from  \cite{orbifold} give  $K^2_S=4$ and $p_g(S)=q(S)=2$,  confirming that the canonical system is a pencil with $4$ base points.
\end{remark}

\section{Unbounded sequences of Wiman Product-Quotient surfaces}

In this section we only consider Wiman product-quotient surfaces of type $n,d_1,d_2$ with both $d_1,d_2$ even.

Identifying a point of $X$ with an orbit of the action of $H$ on $C_1 \times C_2$, the singular points of $X$ correspond to the orbits of cardinality smaller than $n$.

By Proposition \ref{prop_rhoaction} the orbits of the rotation of  a generalized Wiman curve of type $n,d$ with $d$ even are all of order $n$ with $4$ exceptions, $4$ fixed points.
So $X$ has $16$  singular points. A straightforward computation shows that $8$ are of type $\frac{k}{n}$ and $8$ of type $\frac{-k}{n}$.

We consider the invariant $\gamma$ of the basket introduced in \cite{Product-Quotients}*{Section 4}: it vanishes by \cite{Product-Quotients}*{Proposition 4.4} since the basket contains as many points of type $\frac{k}{n}$ as of type $\frac{-k}{n}$. By  \cite{Product-Quotients}*{Proposition 4.1} $K^2_S=8\chi \left( {\mathcal O}_S \right) -2\gamma -l=8\chi \left( {\mathcal O}_S \right) -l$ where $l$ is the number of exceptional curves of $S \rightarrow X$.

Therefore
\[
8-\mu(S)=\frac{8\chi({\mathcal O}_S)-K^2_S}{\chi({\mathcal O}_S)}=
\frac{l}{\chi({\mathcal O}_S)}=\frac{l}{\frac{\left( n \frac{d_1}2 - 2\right)\left( n \frac{d_2}2 - 2\right)}n+4 \left( 1 - \frac1n\right)}
\]

Writing the continued function  of $\frac{n}{k}$
\begin{equation*}\label{contfrac}
\frac{n}{k}=b_1-\cfrac{1}{b_2-\cfrac{1}{b_3-\dots}} 
\end{equation*}

then (\cite{RiemenschneiderDef}*{Section 3}) the number of irreducible components of the resolution above two singular points of respective type $\frac{k}{n}$ and  $\frac{-k}{n}$ equals
$1+\sum (b_j-1)$, so 
\begin{equation}\label{8-mu}
8-\mu(S)=\frac{8\left( 1+\sum (b_j-1) \right)}{\frac{\left( n \frac{d_1}2 - 2\right)\left( n \frac{d_2}2 - 2\right)}n+4 \left( 1 - \frac1n\right)} \approx_{n \rightarrow \infty}\frac{32}{d_1d_2} \frac{ 1+\sum (b_j-1)}n 
\end{equation}

In the simplest case $k=1$ we obtain $\frac{ 1+\sum (b_j-1)}n =\frac{ 1+n-1}n=1$ and then
\begin{theorem}
There is an unbounded sequence $S_n$  of surfaces that have canonical map of degree $4$ such that
\[
\lim_{n \rightarrow \infty} \mu\left(S_n \right) = 8\left( 1-\frac{ 1}{m}\right)
\]  
for all positive integers $m \geq 6$ that are not prime numbers. 
\end{theorem}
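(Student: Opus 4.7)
The plan is to take the simplest possible shift, $k=1$, and to choose the parameters $d_1,d_2$ so that the asymptotic formula \eqref{8-mu} lands exactly on the target value $8(1-1/m)$. When $k=1$, the continued fraction expansion of $n/k=n$ terminates immediately with the single entry $b_1=n$, so $1+\sum_j(b_j-1)=n$ and \eqref{8-mu} reduces, in the limit $n\to\infty$, to
\[
8-\mu(S_n)\longrightarrow \frac{32}{d_1d_2}.
\]
To obtain the value $8/m$ I therefore need $d_1 d_2=4m$; in view of the parity assumption (both $d_j$ even) and the hypotheses of Theorem~\ref{thm_deg4} (i.e.\ $d_1\geq 4$ and $d_2\geq 5$), I want a factorization $m=e_1 e_2$ with integers $e_1\geq 2$ and $e_2\geq 3$, from which I will set $d_j:=2e_j$.

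The first step is an elementary arithmetic lemma: a positive integer $m$ admits a factorization $m=e_1 e_2$ with $e_1\geq 2$ and $e_2\geq 3$ if and only if $m\geq 6$ and $m$ is composite. For the nontrivial direction, let $e_1$ be the least prime divisor of a composite $m\geq 6$; then $e_1\leq \sqrt{m}$, so $e_2:=m/e_1\geq \sqrt{m}\geq \sqrt{6}>2$, hence $e_2\geq 3$ as an integer. The converse is obvious: a prime has no nontrivial factorization, and $m=4$ forces $e_1=e_2=2$. This lemma is the reason the exact hypothesis of the theorem is ``$m\geq 6$ and $m$ is not prime''.

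Given such an $m$, I fix such a factorization, set $d_j:=2e_j$, and for every $n\geq 2$ let $S_n$ be the Wiman product-quotient surface of type $(n,d_1,d_2)$ with shift $k=1$. Since $d_1\geq 4$ and $d_2\geq 6$, Theorem~\ref{thm_deg4} applies and gives both that $K_{S_n}$ is nef and that the canonical map of $S_n$ has degree $4$. The Euler characteristic $\chi(\mathcal{O}_{S_n})$, which equals the denominator in \eqref{8-mu}, expands to $nd_1d_2/4-d_1-d_2+4$, so it is asymptotic to $nd_1d_2/4$ and diverges as $n\to\infty$; the sequence is therefore unbounded. Finally, plugging $k=1$ (and hence $1+\sum_j(b_j-1)=n$) and $d_1 d_2 = 4m$ into \eqref{8-mu} yields
\[
\lim_{n\to\infty}\bigl(8-\mu(S_n)\bigr)=\frac{32}{d_1 d_2}=\frac{32}{4m}=\frac{8}{m},
\]
that is, $\lim_{n\to\infty}\mu(S_n)=8(1-1/m)$, as required.

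I do not anticipate a serious obstacle: every geometric ingredient is already supplied by Theorem~\ref{thm_deg4} and by the computation culminating in \eqref{8-mu}, and the only genuine content is the trivial factorization lemma identifying the admissible values of $m$ for which the construction with $k=1$ can be carried out. In particular, there is no need to resort to more complicated shifts $k$ to realise these countably many slope limits.
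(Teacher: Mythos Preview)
Your proof is correct and follows exactly the same strategy as the paper: choose shift $k=1$ and $d_1=2e_1$, $d_2=2e_2$ for a factorization $m=e_1e_2$ with $e_1\geq 2$, $e_2\geq 3$, then invoke Theorem~\ref{thm_deg4}(2) and the asymptotic \eqref{8-mu}. The only differences are that you spell out the elementary factorization lemma and the divergence of $\chi(\mathcal{O}_{S_n})$, both of which the paper leaves implicit.
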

\begin{proof}
Write $m=ab$ with $a\geq 2$, $b \geq 3$ and pick a sequence of  Wiman product-quotient surfaces $S_n$ of type $n,2a, 2b$ and shift $1$. 

We are in the assumptions of Theorem \ref{thm_deg4}, part (2)  ($d_1=2a \geq 4$, $d_2=2b \geq 6>5$) so the canonical map of  $S_n$ has degree $4$.

Finally, by \eqref{8-mu}
\[
\lim_{n \rightarrow \infty} \mu\left(S_n \right) = 8 - \frac{32}{d_1d_2}  \frac{ 1+\sum (b_j-1)}n =
8 - \frac{8}{ab} \frac{ 1+n-1}n = 8\left( 1-\frac{ 1}{ab}\right).
\]
\end{proof}

\section{Further questions and possible generalizations}
We have studied some natural generalizations of this construction giving surfaces with canonical map of degree $4$.
Unfortunately they do not lead to a substantial improvement of our main result, so  we have decided not to include them in this work.
However, we mention them here for completeness.

We obtain in fact similar results for Wiman product quotient surfaces where the $d_j$ are not both even. One can also consider hyperelliptic curves of equation $y^2=x_0x_1f(x_0^n,x_1^n)$. All these generalizations lead to surfaces with canonical map of degree $4$ and slope in the same range $\left[ 6 + \frac23, 8 \right]$.

The other possible generalization is by considering shifts other that $1$.
More precisely, consider a sequence of positive integers $k_n$, with $1 \leq k_n \leq n-1$, $\gcd(k_n,n)=1$.
Then a sequence $S_n$ of Wiman product-quotient surfaces of type $n,2a, 2b$ and shift $k_n$ has
\[
\lim_{n \rightarrow \infty} \mu(S_n) = 8-8\frac{1}{m} \lim_{n \rightarrow \infty} \sigma \left(\frac{k_n}{n} \right).
\]  
where
\[
\sigma \left( \frac{k}{n} \right):=\frac{ 1+\sum (b_j-1)}n.
\]

Obviously $\sigma \left( \frac{k}{n} \right)> 0$, $\sigma\left( \frac{1}{n} \right)=1$.
 It is known \cite{UrzuaTroncoso}*{Lemma 3.3} that $\sigma \leq 1$. An independent proof has been sent us by J. Stevens.

{\bf Question:} {\it What are the possible limits of $\left\{ \sigma \left( \frac{k}{n} \right) \right\} \subset[0,1]$ for sequences  of rational numbers $\frac{k}{n}$ with unbounded denominators?}

Note $\lim_{n \rightarrow \infty} \sigma \left( \frac{m}{mn+1}  \right)=\frac1{m}$.
We could not obtain any sequence with limit neither zero nor of the form $\frac1m$.  If there were more possible limits, this construction would improve our main result.


\begin{bibdiv}
\begin{biblist}

\bib{Fano}{article}{
   author={Bauer, Ingrid C.},
   author={Catanese, Fabrizio},
   title={Some new surfaces with $p_g=q=0$},
   conference={
      title={The Fano Conference},
   },
   book={
      publisher={Univ. Torino, Turin},
   },
   date={2004},
   pages={123--142},
   review={\MR{2112572}},
}

\bib{4names}{article}{
   author={Bauer, Ingrid},
   author={Catanese, Fabrizio},
   author={Grunewald, Fritz},
   author={Pignatelli, Roberto},
   title={Quotients of products of curves, new surfaces with $p_g=0$ and
   their fundamental groups},
   journal={Amer. J. Math.},
   volume={134},
   date={2012},
   number={4},
   pages={993--1049},
   issn={0002-9327},
   review={\MR{2956256}},
   doi={10.1353/ajm.2012.0029},
}

\bib{Beauville}{article}{
   author={Beauville, Arnaud},
   title={L'application canonique pour les surfaces de type g\'{e}n\'{e}ral},
   language={French},
   journal={Invent. Math.},
   volume={55},
   date={1979},
   number={2},
   pages={121--140},
   issn={0020-9910},
   review={\MR{553705}},
   doi={10.1007/BF01390086},
}

\bib{Bin}{article}{
   author={Bin, Nguyen},
   title={Some infinite sequences of canonical covers of degree 2},
   journal={Adv. Geom.},
   volume={21},
   date={2021},
   number={1},
   pages={143--148},
   issn={1615-715X},
   review={\MR{4203293}},
   doi={10.1515/advgeom-2020-0028},
}

\bib{orbifold}{article}{
   author={Bauer, I.},
   author={Pignatelli, R.},
   title={The classification of minimal product-quotient surfaces with
   $p_g=0$},
   journal={Math. Comp.},
   volume={81},
   date={2012},
   number={280},
   pages={2389--2418},
   issn={0025-5718},
   review={\MR{2945163}},
   doi={10.1090/S0025-5718-2012-02604-4},
}

\bib{Product-Quotients}{article}{
   author={Bauer, Ingrid},
   author={Pignatelli, Roberto},
   title={Product-quotient surfaces: new invariants and algorithms},
   journal={Groups Geom. Dyn.},
   volume={10},
   date={2016},
   number={1},
   pages={319--363},
   issn={1661-7207},
   review={\MR{3460339}},
   doi={10.4171/GGD/351},
}

\bib{fibred}{article}{
   author={Catanese, Fabrizio},
   title={Fibred surfaces, varieties isogenous to a product and related
   moduli spaces},
   journal={Amer. J. Math.},
   volume={122},
   date={2000},
   number={1},
   pages={1--44},
   issn={0002-9327},
   review={\MR{1737256}},
}

\bib{canonicalisogenous}{article}{
   author={Catanese, Fabrizio},
   title={On the canonical map of some surfaces isogenous to a product},
   conference={
      title={Local and global methods in algebraic geometry},
   },
   book={
      series={Contemp. Math.},
      volume={712},
      publisher={Amer. Math. Soc., [Providence], RI},
   },
   date={2018},
   pages={33--57},
   review={\MR{3832398}},
   doi={10.1090/conm/712/14341},
}

\bib{freitag}{article}{
   author={Freitag, Eberhard},
   title={\"{U}ber die Struktur der Funktionenk\"{o}rper zu hyperabelschen Gruppen.
   I},
   language={German},
   journal={J. Reine Angew. Math.},
   volume={247},
   date={1971},
   pages={97--117},
   issn={0075-4102},
   review={\MR{281689}},
   doi={10.1515/crll.1971.247.97},
}

\bib{AliceMatteo}{article}{
   author={Garbagnati, Alice},
   author={Penegini, Matteo},
   title={K3 surfaces with a non-symplectic automorphism and
   product-quotient surfaces with cyclic groups},
   journal={Rev. Mat. Iberoam.},
   volume={31},
   date={2015},
   number={4},
   pages={1277--1310},
   issn={0213-2230},
   review={\MR{3438390}},
   doi={10.4171/RMI/869},
}

\bib{GP1}{article}{
   author={Gallego, Francisco Javier},
   author={Purnaprajna, Bangere P.},
   title={Classification of quadruple Galois canonical covers. I},
   journal={Trans. Amer. Math. Soc.},
   volume={360},
   date={2008},
   number={10},
   pages={5489--5507},
   issn={0002-9947},
   review={\MR{2415082}},
   doi={10.1090/S0002-9947-08-04587-X},
}

\bib{GPR}{article}{
   author={Gleissner, Christian},
   author={Pignatelli, Roberto},
   author={Rito, Carlos},
   title={New surfaces with canonical map of high degree, to appear on Commun. Anal. Geom.},
   journal={arXiv:1807.11854},
}

\bib{harvey}{article}{
   author={Harvey, W. J.},
   title={Cyclic groups of automorphisms of a compact Riemann surface},
   journal={Quart. J. Math. Oxford Ser. (2)},
   volume={17},
   date={1966},
   pages={86--97},
   issn={0033-5606},
   review={\MR{201629}},
   doi={10.1093/qmath/17.1.86},
}

\bib{survey}{article}{
   author={Mendes Lopes, M.},
   author={Pardini, R.},
   title={ On the Degree of the Canonical Map of a Surface of General Type},
   conference={
      title={The Art of Doing Algebraic Geometry},
   },
   book={title={Trends in Mathematics},
      publisher={Birkhäuser, Cham. },
   },
   date={2023},
   doi={10.1007/978-3-031-11938-5\_13},
}

\bib{RiemenschneiderDef}{article}{
   author={Riemenschneider, Oswald},
   title={Deformationen von Quotientensingularit\"{a}ten (nach zyklischen
   Gruppen)},
   language={German},
   journal={Math. Ann.},
   volume={209},
   date={1974},
   pages={211--248},
   issn={0025-5831},
   review={\MR{367276}},
   doi={10.1007/BF01351850},
}

\bib{UrzuaTroncoso}{article}{
   author={Troncoso, Sergio},
   author={Urzua, Giancarlo},
   title={Savage surfaces},
   journal={J. Eur. Math. Soc. (2022),},
      doi={10.4171/JEMS/1284},

}

\bib{XiaoGang}{article}{
   author={Xiao, Gang},
   title={Algebraic surfaces with high canonical degree},
   journal={Math. Ann.},
   volume={274},
   date={1986},
   number={3},
   pages={473--483},
   issn={0025-5831},
   review={\MR{842626}},
   doi={10.1007/BF01457229},
}

\bib{wiman}{article}{
   author={Wiman, A.},
   title={\"{U}ber die hyperelliptichen Kurven und diejenigen vom Geschlecht
$p = 3$, welche eindeutige Transformationen in sich selbst zulassen},
   language={German},
   journal={Bihang. Till. Kungl. Svenska Vetenskaps-Akademiens Handlingar},
   volume={21},
   date={1895},
   number={1},
   pages={1--23},
   issn={},
   review={},
   doi={},
}
\end{biblist}
\end{bibdiv}

\end{document}